
\documentclass[10pt]{article}
\usepackage{amsmath}
\usepackage{amsfonts}
\usepackage{amssymb}
\usepackage{mathtools}
\usepackage{geometry}
\usepackage{latexsym}
\usepackage{extarrows}
\usepackage{bm}
\usepackage{braket}
\usepackage{amsthm}
\usepackage{hyperref}
\usepackage{enumitem}
\usepackage[osf]{mathpazo}
\usepackage{graphicx}
\usepackage{enumitem}

\setcounter{MaxMatrixCols}{10}

\newtheorem{theorem}{Theorem}

\newcommand*{\skal}
[2]{\left\langle\,#1\,\middle|\,#2\,\right\rangle}
\newcommand{\0}{\mathbf{0}}
\newcommand{\R}{\mathbb{R}}

\newgeometry{tmargin=3.5cm, bmargin=3.5cm, lmargin=2.25cm, rmargin=2.25cm}

\begin{document}

\author{Micha\l\ Be\l dzi\'{n}ski, Marek Galewski and Robert Stegli\'{n}ski}
\title{Solvability of abstract semilinear equations by a global
diffeomorphism theorem}
\maketitle

\begin{abstract}
In this work we proivied a new simpler proof of the global diffeomorphism theorem from \cite{Idczak} which we further apply to consider unique solvability of some abstract semilinear equations. Applications to the second order Dirichlet
problem driven by the Laplace operator are given.
\end{abstract}

\begin{center}
Institute of Mathematics Lodz University of Technology\\[0pt]
\L \'{o}d\'{z}, Poland
\end{center}

\section{Introduction}

The idea of applying global invertibility results to boundary value problems
and integral, integro-differential equations has been known in the
literature for some time now. There is a variational tool concerning global invertibility which we are going to use.

\begin{theorem}
	\label{theorem_idczak} 
	\cite[Theorem 3.1]{Idczak}Let $X$ be a real Banach space and let $H$ be a real Hilbert space.
	Suppose that $F:X\rightarrow H$ is a $C^{1}$ mapping such that:
	\begin{enumerate}[label=\textbf{D\arabic*}]
		\item\label{idczak1} for every $y\in H$ functional $\varphi_y:X\to\mathbb{R}$ given by $\varphi_y(x):=\tfrac{1}{2}\|F(x)-y\|^2$ satisfies Palais-Smale condition, i.e. every sequence $(x_n)_{n\in\mathbb{N}}\subset X$
		such that $(\varphi_y(x_n))_{n\in\mathbb{N}}$ is bounded and $\varphi_y^{\prime
		}(x_n)\to\mathbf{0}_{X^*}$ admits convergent subsequence;
		\item\label{idczak2} for every $x\in X$ an operator $F^{\prime }(x)$ is bijective.
	\end{enumerate}
	Then $F$ is diffeomorphism.
\end{theorem}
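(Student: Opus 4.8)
The plan is to show that $F$ is a bijective local $C^{1}$-diffeomorphism; once that is established, the locally defined inverse branches of $F$ agree on overlaps and patch into a global $C^{1}$ inverse, so $F$ is a diffeomorphism. \textbf{Step 1 (local diffeomorphism).} For each $x\in X$ the operator $F'(x)\in L(X,H)$ is a continuous linear bijection between Banach spaces, hence a topological isomorphism by the bounded inverse theorem; the inverse function theorem for $C^{1}$ maps between Banach spaces then provides around every point an open set on which $F$ restricts to a $C^{1}$-diffeomorphism onto an open subset of $H$. In particular $F$ is an open map and a local homeomorphism.

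\textbf{Step 2--3 (surjectivity and properness).} Both come from Ekeland's variational principle fed into \ref{idczak1}. Fix $y\in H$; the functional $\varphi_y\ge 0$ is $C^{1}$ and bounded below. Running Ekeland along a minimizing sequence for $\varphi_y$ yields $(u_n)\subset X$ with $\varphi_y(u_n)\to\inf\varphi_y$ and $\varphi_y'(u_n)\to\0_{X^{*}}$; by \ref{idczak1} a subsequence converges to a global minimizer $\bar x$ with $\varphi_y'(\bar x)=\0$, and since $\varphi_y'(\bar x)h=\skal{F(\bar x)-y}{F'(\bar x)h}$ for all $h\in X$ while $F'(\bar x)$ is onto $H$ by \ref{idczak2}, we get $F(\bar x)=y$; so $F$ is surjective. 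For properness, let $K\subset H$ be compact, $(x_n)\subset F^{-1}(K)$, and, passing to a subsequence, $F(x_n)\to y\in K$, so $\varphi_y(x_n)\to 0=\inf\varphi_y$. Applying Ekeland to $\varphi_y$ with parameter $\sqrt{\varepsilon_n}$, where $\varepsilon_n:=\varphi_y(x_n)+1/n$, produces $u_n$ with $\|u_n-x_n\|\le\sqrt{\varepsilon_n}$, $\varphi_y(u_n)\le\varphi_y(x_n)$ and $\|\varphi_y'(u_n)\|_{X^{*}}\le\sqrt{\varepsilon_n}$; thus $(u_n)$ is a Palais--Smale sequence for $\varphi_y$, so by \ref{idczak1} $u_{n_k}\to\bar x$ along a subsequence, and since $\|u_n-x_n\|\to 0$ also $x_{n_k}\to\bar x$. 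Hence $F^{-1}(K)$ is compact, i.e.\ $F$ is proper; in particular every fibre $F^{-1}(y)$ is compact, and being discrete by Step 1 it is finite.

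\textbf{Step 4 (injectivity and conclusion).} A proper local homeomorphism with finite fibres onto a metric space is a covering map: writing $F^{-1}(y)=\{a_1,\dots,a_m\}$, pick pairwise disjoint open $W_i\ni a_i$ with $F|_{W_i}$ a homeomorphism onto an open set; properness forces some $r>0$ with $F^{-1}(B(y,r))\subset\bigcup_i W_i$ --- otherwise a sequence $z_n\in F^{-1}(B(y,1/n))\setminus\bigcup_i W_i$ would, by properness, accumulate at a point of $F^{-1}(y)\subset\bigcup_i W_i$, which is absurd --- and then $V:=B(y,r)\cap\bigcap_i F(W_i)$ is an evenly covered neighbourhood of $y$. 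Since $H$ is contractible, hence connected, locally path-connected and simply connected, while $X$ is connected, this covering has exactly one sheet, i.e.\ $F$ is injective. Combined with Step 2--3, $F$ is a bijective local $C^{1}$-diffeomorphism, hence a $C^{1}$-diffeomorphism.

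The one genuinely substantive step is the properness assertion in Step 2--3: this is where \ref{idczak1} is actually used, and the mechanism is the Ekeland perturbation, which converts an \emph{arbitrary} sequence lying over a compact set into a bona fide Palais--Smale sequence (note $\varepsilon_n>0$ always, so no special case is needed when $\varphi_y(x_n)=0$). Step 1 is just the inverse function theorem, and Step 4 is soft covering-space theory over the contractible target $H$; I expect the write-up of Step 4 --- checking the evenly-covered neighbourhood and that simple connectedness of $H$ together with connectedness of $X$ collapse the covering to a homeomorphism --- to be routine but the lengthiest part to spell out carefully.
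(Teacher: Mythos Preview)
Your argument is correct, but the injectivity half takes a genuinely different route from the paper. Both proofs handle surjectivity the same way (PS plus bounded below yields a critical point, and surjectivity of $F'(x_0)$ forces $F(x_0)=y$). For injectivity, however, the paper argues by contradiction via a Pucci--Serrin refinement of the mountain pass theorem: if $F(x_1)=F(x_2)$ with $x_1\neq x_2$, then $\psi(x):=\varphi_{F(x_1)}(x+x_2)$ has $\psi(\0_X)=\psi(x_1-x_2)=0$ with $\0_X$ a strict local minimum (by the local-diffeomorphism property), so there is a critical point $\bar x$ with $\psi(\bar x)>0$ which is not a local minimum; but criticality again forces $\psi(\bar x)=0$, a contradiction. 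Your approach instead upgrades the Ekeland argument to prove that $F$ is \emph{proper}, then runs the Hadamard--Plastock line: a proper local homeomorphism onto the simply connected space $H$ is a covering, hence a bijection. The paper's route stays entirely inside critical-point theory and is shorter once Pucci--Serrin is available; your route avoids mountain-pass machinery altogether, yields properness of $F$ as a byproduct, and makes explicit the link to the classical topological global-inversion theorems (cf.\ the reference to Plastock in the paper's bibliography). Your handling of the even-covering argument is careful enough not to rely on local compactness of $X$ or $H$, which is essential here.
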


The proof of Theorem \ref{theorem_idczak} relies on the application of the
celebrated Mountain Pass Theorem due to Ambrosetti and Rabinowitz, see \cite%
{Amb_Rab} and relies in checking that the functional $\varphi $ satisfies
the mountain geometry. Precisely speaking the fact that $f$ is onto is
reached through the classical Ekeland's Variational Principle. The
injectivity part is obtained by contradiction assuming to the contrary and
arguing by the application of the Mountain Pass Theorem. The most difficult
part of the proof is the estimation of $\varphi $ on some sphere around $0$.
However, we will show using some ideas from \cite{pucci} that the proof can
be performed in a different and more readible manner thus simplifying the
arguments from {\cite{Idczak}. }Theorem \ref{theorem_idczak}\ proposes some
approach towards the existence of solutions to nonlinear equations which is
variational in spirit, i.e. concerns the usage of certain functional which
is at the same different from the classical energy (Euler type) action
functional. Moreover, it allows for obtaining uniqueness of a solutions
without any notion of convexity, again contrary to what is known in the
application of a direct method, see for example \cite[Corollary 1.3]{Ma}.
However up to now Theorem \ref{theorem_idczak} and related global implicit
function theorem from \cite{idczakImplicit} have been applied to various
first order integro-differential problems which cover also the so called
fractional case (with the fractional derivative) and correspond to Urysohn
and Volterra type equations, see \cite{bors,idczakWalczak,majewski}. Some
comments on the global invertibility results from \cite{Idczak}, relation
with other approaches and possible applications are contained in \cite{gutu}%
. There was also an attempt to examine second order Dirichlet problem for
O.D.E. in \cite{MBMG}, but for some specific problem and without any
abstract scheme allowing for considering boundary value problems in some
unified manner. Results for continuous problem in \cite{MBMG} are related to
the existence result obtained in \cite{Radu}, although the methods are
different, both yield the existence with similar assumptions. This suggests
that possibly the abstract framework here is to be obtained with some
different global invertibility result. Our applications are meant for
partial differential equations and thus do not have their counterparts in 
\cite{Radu}.

In this work we aim at proposing some abstract approach in order to examine
solvability of some semilinear equations pertaining to second order
Dirichlet problems for both ordinary and partial differential equations
using the approach suggested by Theorem \ref{theorem_idczak}. Our results
towards abstract approach were inspired by some recent abstract approaches
developed in \cite{Budescu,precup2} which were based on the variational
framework due to \cite{precup2} and which utilized relations between
critical points to actions functional and \^{A}\ldots xed points to certain
mappings. Nevertheless, our approach towards solvability is different and
relies on different abstract tools. Morevoer, the setting is now somehow
different since for the sake of global invertiblity densly defined
operators are insuficient. In fact one need to consider the
domain of the operator with its natural topology induced by a suitable norm.

\section{Problem formulation and main results}

Let $(H,\skal{\cdot}{\cdot})$ be a real Hilbert space with a norm denoted by $%
\Vert \cdot \Vert $ and let $A$ be a self-adjoint operator on $H$ with the
domain $D(A)$. Recall that $(D(A),\skal{\cdot}{\cdot}_{A})$ is a real Hilbert
space, where $\skal{\cdot}{\cdot}_{A}=\skal{\cdot}{\cdot}+%
\skal{A\cdot}{A\cdot}$. By $\Vert \cdot \Vert _{A}$ we denote its norm, i.e.
the graph norm of $A$. Let $(B,\Vert \cdot \Vert
_{B})$ be a real Banach space and let $N:(B,\Vert
\cdot \Vert _{B})\rightarrow (H,\skal{\cdot}{\cdot})$ be an operator which
is not necessary linear. In this framework we shall study in $D(A)$\ the
following equation 
\begin{equation}
Ax=N(x)  \label{mainEqu}
\end{equation}

In order to consider (\ref{mainEqu}) we will make the following assumptions:

\begin{enumerate}
\item[\normalfont (A1)] $D(A)\subset B\subset H$ and the embedding $(D(A),%
\skal{\cdot}{\cdot}_{A})\hookrightarrow(B,\|\cdot\|_B)$ is compact;

\item[\normalfont (A2)] $\skal{Au}{u} \geqslant \alpha \| u\|^2$ for some $%
\alpha >0$ and all $u\in D(A)$;

\item[\normalfont (N1)] $N$ is of class $C^1$ with $N^{\prime }(u)$
symmetric for all $u\in B$;

\item[\normalfont (N2)] there exist constants $0<\beta<1$, $0<\gamma<\alpha$%
, $\delta>0$, such that:

\begin{itemize}
\item[\normalfont (i)] $\|N(u)\| <\beta\|Au\|+\delta$ for all $u\in D(A)$;

\item[\normalfont (ii)] $\skal{N'(u)h}{h} <\gamma\|h\|^2 $ for all $u,h\in
D(A)$.
\end{itemize}
\end{enumerate}

Our main result reads as follows.

\begin{theorem}
	\label{theorem_main} Assume that {\normalfont (A1)-(A2)} and {\normalfont%
		(N1)-(N2)} are satisfied. Then equation {\normalfont (\ref{mainEqu})} has a
	unique solution in $D(A)$.
\end{theorem}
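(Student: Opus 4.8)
The plan is to apply Theorem~\ref{theorem_idczak} with $X=(D(A),\|\cdot\|_A)$, the same Hilbert space $H$, and the map $F:D(A)\to H$ defined by $F(x):=Ax-N(x)$, where $N$ is understood as acting on $D(A)$ via the embedding $D(A)\hookrightarrow B$ from (A1). Then solutions of \eqref{mainEqu} in $D(A)$ are exactly the zeros of $F$, and once we know $F$ is a diffeomorphism of $D(A)$ onto $H$, existence and uniqueness of a zero are immediate. So the whole task reduces to verifying the two hypotheses \ref{idczak1} and \ref{idczak2} of Theorem~\ref{theorem_idczak}.

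First I would check that $F$ is $C^1$: $x\mapsto Ax$ is bounded linear from $(D(A),\|\cdot\|_A)$ to $H$ (by definition of the graph norm) hence $C^1$, and $x\mapsto N(x)$ is $C^1$ by (N1) composed with the continuous embedding, so $F'(x)h=Ah-N'(x)h$. Next, hypothesis~\ref{idczak2}: fix $x$ and set $L:=F'(x)=A-N'(x)$. Using (A2) and (N2)(ii), for $h\in D(A)$ we get $\skal{Lh}{h}=\skal{Ah}{h}-\skal{N'(x)h}{h}>(\alpha-\gamma)\|h\|^2$, and $\alpha-\gamma>0$. Since $A$ is self-adjoint and $N'(x)$ is a symmetric (and, one checks, bounded, because $N$ maps into $H$ and is $C^1$ on $B$) operator, $L$ is self-adjoint on $D(A)$ and bounded below by a positive constant; by standard spectral theory for self-adjoint operators this makes $L$ bijective from $D(A)$ onto $H$, which is \ref{idczak2}. (One must take a little care that $N'(x)$ is a bounded perturbation so that $D(L)=D(A)$ and self-adjointness is preserved — this is where the fact that we topologize $D(A)$ with the graph norm, flagged in the introduction, is used.)

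The main obstacle is hypothesis~\ref{idczak1}, the Palais--Smale condition for $\varphi_y(x)=\tfrac12\|F(x)-y\|^2$. Here $\varphi_y'(x)=F'(x)^*(F(x)-y)=L_x(F(x)-y)$ with $L_x=A-N'(x)$ self-adjoint. Suppose $(x_n)\subset D(A)$ with $\varphi_y(x_n)$ bounded and $\varphi_y'(x_n)\to 0$ in $H$. From boundedness of $\varphi_y(x_n)$ we get $\|Ax_n-N(x_n)-y\|$ bounded, and then (N2)(i) gives $\|Ax_n\|\le \|N(x_n)\|+\|y\|+C\le \beta\|Ax_n\|+\delta+\|y\|+C$, so $(1-\beta)\|Ax_n\|$ is bounded and hence $(\|Ax_n\|)_n$ is bounded; combined with (A2), $\|x_n\|$ is bounded as well, so $(x_n)$ is bounded in $(D(A),\|\cdot\|_A)$. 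By (A1) the embedding into $B$ is compact, so along a subsequence $x_n\to x$ in $B$, whence $N(x_n)\to N(x)$ in $H$ by continuity of $N$. Now I would rewrite the convergence $\varphi_y'(x_n)\to 0$: setting $w_n:=Ax_n-N(x_n)-y$, we have $L_{x_n}w_n=Aw_n-N'(x_n)w_n\to 0$. Since $N'(x_n)$ is uniformly bounded on the bounded set and $N'(x_n)w_n$ can be shown to converge (using $x_n\to x$ in $B$ and boundedness of $w_n$ in $H$), we obtain $Aw_n\to$ (something) in $H$; together with $w_n$ bounded and the coercivity estimate $\skal{Aw_n}{w_n}\ge\alpha\|w_n\|^2$, one deduces $w_n$ is Cauchy in $H$ and then $Aw_n$ is too, so $w_n\to w$ in $(D(A),\|\cdot\|_A)$. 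Finally $Ax_n=w_n+N(x_n)+y$ converges in $H$ and $x_n$ converges in $H$, so $x_n$ converges in the graph norm, giving the required convergent subsequence. The delicate points in this last paragraph — extracting genuine norm (not just weak) convergence of $Ax_n$ from the Palais--Smale information, and handling the term $N'(x_n)w_n$ — are where I expect the real work to lie, and are presumably where the "ideas from \cite{pucci}" referenced in the introduction come in.
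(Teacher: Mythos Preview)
Your overall strategy is the same as the paper's: apply Theorem~\ref{theorem_idczak} with $X=(D(A),\|\cdot\|_A)$ and $F(x)=Ax-N(x)$, verify \ref{idczak2} via the estimate $\skal{(A-N'(x))h}{h}\ge(\alpha-\gamma)\|h\|^2$ together with self-adjointness of $A-N'(x)$, and for \ref{idczak1} first use (N2)(i) to get boundedness of a PS sequence in $X$ and then (A1) to extract convergence in $B$. Up to this point your outline matches the paper.

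The gap is in the final step of the Palais--Smale argument. You write $\varphi_y'(x_n)=L_{x_n}w_n$ with $w_n=F(x_n)-y\in H$ and then reason about $Aw_n$; but $w_n$ lies only in $H$, not in $D(A)$, so $Aw_n$ is in general undefined. More fundamentally, $\varphi_y'(x_n)$ is an element of $X^*=(D(A),\|\cdot\|_A)^*$, not of $H$: the identity $\varphi_y'(x)=F'(x)^*(F(x)-y)$ involves the Banach-space adjoint $F'(x)^*:H\to X^*$, which is \emph{not} the self-adjoint unbounded operator $A-N'(x)$ acting on its domain. Hence the chain ``$L_{x_n}w_n\to 0$ in $H$, so $Aw_n\to\ldots$'' never gets started, and your subsequent deductions about $w_n$ being Cauchy in the graph norm have no foundation. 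The paper instead uses the standard device of testing $\varphi_y'(u_n)\in X^*$ against the vector $u_n-u_0\in X$ (where $u_0$ is the weak limit in $X$): a direct expansion gives
\[
\varphi_y'(u_n)(u_n-u_0)-\varphi_y'(u_0)(u_n-u_0)=\|Au_n-Au_0\|^2+\text{(four cross terms)},
\]
and each cross term tends to zero using only $u_n\to u_0$ in $B$, continuity of $N$ and $N'$ on $B$, and boundedness of $(Au_n)$ in $H$. Since the left-hand side also tends to zero, one obtains $\|Au_n-Au_0\|\to 0$ and hence graph-norm convergence. (Incidentally, the ``ideas from \cite{pucci}'' mentioned in the introduction concern the new proof of Theorem~\ref{theorem_idczak} itself, not the PS verification in Theorem~\ref{theorem_main}.)
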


In this Theorem we may replace assumption $(A1)$ by the following one:

\begin{enumerate}
	\item[\normalfont(A1')] $(B,\Vert \cdot \Vert _{B})=(H,\skal{\cdot}{\cdot})$
	and $A:D(A)\rightarrow H$ is a self-adjoint operator with purely discrete
	spectrum,
\end{enumerate}

because then the embedding $(D(A),\skal{\cdot}{\cdot}_{A})\hookrightarrow(H,%
\skal{\cdot}{\cdot})$ is compact, by {\normalfont\cite[Proposition 5.12]{S}}.

{\bf Remark.} While all spaces which we consider are real, the theory developled in \cite{S} works for complex spaces. Nevetheless, results which we use (namely: Proposition 3.10,  Proposition 5.12, Proposition 10.19) can be clearly taken to the setting of a real space using the spirit of a book by Brezis \cite{B}. Moreover Kato-Rellich Theorem for the setting of a real space is contained in \cite{Gust}.

\section{Proofs}

For the proof of Theorem \ref{theorem_idczak} we need the following Theorem.
\begin{theorem}
\label{theorem_pucci}
\cite[Theorem 2]{pucci} Let $X$ be a Banach space and let $J:X\to\R$ be a $C^1$ functional satisfying Palais-Smale condition with $\0_X$ its strict local minimum. If there exists $e\neq\0_X$ such that $J(e)\leqslant J(\0_X)$, then there is a critical point $\bar{x}$ of $J$, with $J(\bar{x})>J(\0_X)$, witch is not a local minimum.
\end{theorem}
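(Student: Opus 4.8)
\emph{Proof proposal.}
The plan is to obtain $\bar x$ as a minimax critical point of mountain-pass type. Write $m:=J(\0_X)$ and, using that $\0_X$ is a strict local minimum, fix $\rho\in(0,\|e\|)$ with $J(x)>m$ whenever $0<\|x\|\le\rho$. Set
\[
\Gamma:=\{\gamma\in C([0,1];X)\,:\,\gamma(0)=\0_X,\ \gamma(1)=e\},\qquad
c:=\inf_{\gamma\in\Gamma}\ \max_{t\in[0,1]}J(\gamma(t)).
\]
Since every $\gamma\in\Gamma$ passes through $\0_X$ we have $c\ge m$, and $\max\{J(\0_X),J(e)\}=m$ because $J(e)\le m$. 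Everything hinges on whether $c>m$ or $c=m$.

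Suppose first $c>m=\max\{J(\0_X),J(e)\}$. Then the usual deformation argument applies — it is available since $J$ is $C^1$ and satisfies Palais--Smale at level $c$ — and shows $c$ is a critical value: otherwise a path with $\max J\circ\gamma$ close to $c$ could be deformed to have maximum below $c$, contradicting the definition of $c$. The critical set $K_c$ at level $c$ is then compact and nonempty. To single out a non-minimum: if every point of $K_c$ were a local minimum then, near each such point $J\ge c$, so a path nearly realizing $c$ could be pushed \emph{around} the compact set $K_c$ (a local minimum is a bump one detours, not a barrier) to lower its maximum below $c$ — again a contradiction. Hence there is $\bar x\in K_c$ with $J(\bar x)=c>m$ that is not a local minimum; this is the mountain-pass-point refinement of Pucci--Serrin/Hofer type, which I would cite rather than reprove.

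The remaining case $c=m$ is the genuinely delicate one, and I expect it to be the main obstacle, because it can actually occur: in infinite dimensions $\inf_{\|x\|=r}J$ may equal $m$ for every $r\in(0,\rho]$ even though $J>m$ on the punctured ball, so the minimax value carries no information strictly above $m$. One first notes that if $\inf_{\|x\|=r}J>m$ for some $r\in(0,\rho]$, then every $\gamma\in\Gamma$ meets the sphere $\{\|x\|=r\}$ (it starts inside and, since $\|e\|>\rho\ge r$, ends outside), giving $c\ge\inf_{\|x\|=r}J>m$, a contradiction; hence $\inf_{\|x\|=r}J=m$, not attained, for all such $r$. This exhibits $\0_X$ as an \emph{isolated} point of the sublevel set $\{J\le m\}$, with $e$ lying in a different component. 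The strictness of the minimum must then be converted, through a localized deformation/Ekeland-type construction on a thin annulus $\{r_0\le\|x\|\le\rho\}$ (on which $J\ge m$ with infimum $m$ unattained), into an interior Palais--Smale sequence; by (PS) this yields a critical point, and the topology of $\{J\le m\}$ near $\0_X$ is what forces it to lie strictly above $m$ and to fail to be a local minimum. Making this last conversion precise — substituting the weak datum "strict local minimum, i.e. isolated in $\{J\le J(\0_X)\}$" for the absent classical "moat" $\inf_{\{\|x\|=\rho\}}J>\max\{J(\0_X),J(e)\}$, and checking "not a local minimum" rather than merely "critical" — is the hard part of the argument; outside that case the proof is the standard mountain-pass machinery.
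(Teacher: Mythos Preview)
The paper does not prove this theorem at all: it is quoted verbatim from Pucci--Serrin and used as a black box in the proof of Theorem~\ref{theorem_idczak}. There is therefore no ``paper's own proof'' to compare your attempt against.

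As for your sketch itself: the overall architecture is right --- set up the mountain-pass value $c$ over paths from $\0_X$ to $e$, split into $c>m$ and $c=m$, and in the first case invoke the Hofer/Pucci--Serrin refinement that the minimax level carries a critical point which is not a local minimizer. Your treatment of $c=m$, however, is not a proof but a description of what one hopes to do; you say as much. The genuine content of the Pucci--Serrin result lies precisely there: one must show that the hypothesis ``$\0_X$ is a \emph{strict} local minimum'' (with no quantitative moat $\inf_{\|x\|=\rho}J>m$ assumed) still forces either $c>m$ or the existence of a critical point at a level strictly above $m$ that is not a local minimum. Your annulus/Ekeland idea points in a plausible direction, but nothing you wrote rules out, for instance, a Palais--Smale sequence produced in the annulus converging back to $\0_X$ (which lies on its boundary and is critical at level $m$), nor does it explain why the limit must sit at a level strictly above $m$. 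Until that step is made precise, the argument has a real gap in exactly the case that distinguishes this theorem from the classical Mountain Pass Lemma.
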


\begin{proof}[Proof of Theorem \ref{theorem_idczak}]
Firstly, we show that operator $F$ is ,,onto''. Fix $y\in H$. As $F$  is of class $C^1$, $\varphi_y(x)=\tfrac{1}{2}\|F(x)-y\|^2$ is of the same type and its differential $\varphi_y'(x)$ at $x\in X$ is given by
\begin{align*}
\varphi_y'(x)h=\skal{F(x)-y}{F'(x)h}.
\end{align*}
for all $h\in X$.
Clearly, $\varphi_y$ is bounded from below and it satisfies Palais-Smale condition, by \ref{idczak1}. Hence, $\varphi_y$ has a critical point (see \cite[Chapter 3, Corollary 3.3]{Ma}). In other words, there exists $x_0\in X$ such that $\skal{F(x_0)-y}{F'(x_0)h}=0$ for all $h\in X$. Since $F'(x_0)$ is surjective, $F(x_0)-y=0$ and so $F(x_0)=y$.

Now we show that $F$ is ,,one-to-one''. Aiming for a contradiction, suppose that  there exist $x_1,x_2\in X$ such that $x_1\neq x_2$ and $F(x_1)=F(x_2)$. Define $e:=x_1-x_2$ and put $\psi:X\to\R$ by formula 
\begin{align*}
\psi(x):=\tfrac{1}{2}\|F(x+x_2)-F(x_1)\|^2=\varphi_{F(x_1)}(x+x_2).
\end{align*}
Then $\psi$ is of class $C^1$ and $\psi(\0_X)=\psi(e)=0$. Moreover, $\0_X$ is a strict local minimum of $\psi$, since otherwise, in any neighbourhood of $\0_X$ we would have a nonzero $x$ with $F(x+x_2)-F(x_1) = \0_H$ and this would contradict the fact that $F$ defines a local diffeomorphism. Therefore we can apply Theorem \ref{theorem_pucci} and, in consequence, there exists $\bar{x}\in X$ such that $\psi(\bar{x})>0$ and $\psi'(\bar{x})=\0_{X^*}$. Hence 
\begin{align*}
\psi'(\bar{x})h=\skal{F(\bar{x}+x_2)-F(x_1)}{F'(\bar{x}+x_2)h}=0
\end{align*}
for all $h\in X$. Again, by surjectivity of $F'(\bar{x}+x_2)$, we have $F(\bar{x}+x_2)-F(x_1)=0$ and so $\psi(\bar{x})=0$, which contradicts $\psi(\bar{x})>0$. Obtained contradiction ends the proof.
\end{proof}

Now, we can present the proof of the main Theorem.
\begin{proof}[Proof of Theorem \ref{theorem_main}]
By (A2) we have $\Vert Au\Vert \geqslant \alpha \Vert u\Vert $ for $u\in D(A)$ and so
\begin{equation} \label{equiv}
\Vert Au\Vert \geqslant \tfrac{\alpha }{1+\alpha }(\Vert Au\Vert +\Vert u\Vert )\geqslant \tfrac{\alpha }{1+\alpha }\Vert u\Vert _A.
\end{equation}
Let $X:=(D(A),\Vert\cdot\Vert_A)$ and let  the operator $\widetilde{N}:X\rightarrow H$ be defined by $\widetilde{N}=N\circ i$, where $i:X\hookrightarrow (B,\Vert \cdot \Vert_{B})$ is a compact embedding given by (A1). Then $\widetilde{N}\in C^{1}(X,H)$ and operator $\widetilde{N}^{\prime }(u)$ is symmetric compact and linear for all $u\in X$, by (N1). Since $i(u)=u$, any solution of equation
\begin{equation*}
Au=\widetilde{N}(u)
\end{equation*}
is also a solution of equation (\ref{mainEqu}).

Let us define $F:X\rightarrow H$ by 
\begin{equation}
F(u):=Au-\widetilde{N}(u).  \label{formula_F}
\end{equation}
Fix $y\in H$ and consider the mapping $\varphi _{y}:X\rightarrow \mathbb{R}$ given by 
\begin{equation}
\varphi _{y}(u):=\tfrac{1}{2}\Vert F(u)-y\Vert ^{2}.
\label{formula_varphi_y}
\end{equation}
Then $\varphi _{y}\in C^{1}(X,\mathbb{R})$, $F\in C^{1}(X,H)$ and its derivatives are given, respectively, by the following formulas 
\begin{equation*}
\varphi _{y}^{\prime }(u)h=\skal{Au-\widetilde{N}(u)-y}{Ah-\widetilde{N}'(u)h}
\end{equation*}
and 
\begin{equation*}
F^{\prime }(u)h=Ah-\widetilde{N}^{\prime }(u)h
\end{equation*}
for every $u,h\in X$. 

In order to be able to use Theorem \ref{theorem_idczak} we must show that $\varphi _{y}$ satisfies Palais - Smale condition and $F^{\prime }(u)$ is bijective for all $u\in X$.\bigskip

By applying (N2) we see that 
\begin{equation*}
\Vert F(u)-y\Vert =\Vert Au-N(u)-y\Vert \geqslant \Vert Au\Vert -\beta \Vert Au\Vert -\delta -\Vert y\Vert \geqslant \left( 1-\beta \right) \Vert u\Vert_{X}-\delta -\Vert y\Vert
\end{equation*}
for every $u\in X$. This implies that $\varphi _{y}$ is coercive. Thus any (PS) sequence can be assumed to be weakly convergent.\bigskip

Now we show that the functional $\varphi_y $ satisfies (PS) condition on $X$. Assume that $(u_{n})_{n\in \mathbb{N}}\subset X$ is such that:

\begin{itemize}[align=left]
\item[(PS1)] $(\varphi_y (u_{n}))_{n\in \mathbb{N}}$ is bounded;
\item[(PS2)] $\varphi_y^{\prime }(u_{n})\to\mathbf{0}_{X^*}$ if $n\to\infty$.
\end{itemize}

Since $\varphi _{y}$ is coercive, (PS1) shows that $(u_{n})_{n\in \mathbb{N}%
} $ is bounded in $X$, and then after a subsequence, it is weakly convergent
to some $u_{0}\in X$. From (A1) there exists another subsequence, denote it
once again by $(u_{n})_{n\in \mathbb{N}}$, wich is convergent in $(B,\Vert
\cdot \Vert _{B})$. So, by our assumptions we have

\begin{itemize}[align=left]
\item $u_{n}\to u_0$ in $B$;
\item $\widetilde{N}(u_n)\to \widetilde{N}(u_0)$ in $H$;
\item $\widetilde{N}^{\prime }(u_n)\to \widetilde{N}^{\prime }(u_0)$ in ${\mathcal{L}(B,H)}$;
\item $(A(u_n))_{n\in\mathbb{N}}$ is bounded in $H$.
\end{itemize}

Now, a direct calculation yields 
\begin{equation}
\varphi _{y}^{\prime }(u_{n})(u_{n}-u_{0})-\varphi _{y}^{\prime}(u_{0})(u_{n}-u_{0})=\Vert Au_{n}-Au\Vert ^{2}+\sum_{k=1}^{4}\psi_{k}(u_{n}),  \label{equality_PS}
\end{equation}
where 
\begin{align*}
\psi _{1}(u_{n})& =\skal{Au_0-\widetilde{N}(u_0)}{\widetilde{N}'(u_0)(u_n-u_0)}, \\
\psi _{2}(u_{n})& =\skal{\widetilde{N}(u_0)-\widetilde{N}(u_n)}{Au_n-Au_0}, \\
\psi _{3}(u_{n})& =\skal{\widetilde{N}(u_n)-Au_n}{\widetilde{N}'(u_n)(u_n-u_0)}, \\
\psi _{4}(u_{n})& =\skal{y}{(\widetilde{N}'(u_0)-\widetilde{N}'(u_n))(u_n-u_0)}.
\end{align*}
Then, using observations made above, we obtain 
\begin{align*}
|\psi _{1}(u_{n})|& \leqslant \Vert Au_{0}-\widetilde{N}(u_{0})\Vert \Vert \widetilde{N}^{\prime}(u_{0})(u_{n}-u_{0})\Vert \rightarrow 0, \\
|\psi _{2}(u_{n})|& \leqslant \Vert Au_{n}-Au_{0}\Vert \Vert \widetilde{N}(u_{n})-\widetilde{N}(u_{0})\Vert \rightarrow 0, \\
|\psi _{3}(u_{n})|& \leqslant \Vert \widetilde{N}(u_{n})-Au_{n}\Vert \Vert \widetilde{N}^{\prime}(u_{n})(u_{n}-u_{0})\Vert \rightarrow 0, \\
|\psi _{4}(u_{n})|& \leqslant \Vert y\Vert \Vert (\widetilde{N}^{\prime}(u_{0})-\widetilde{N}^{\prime }(u_{n}))(u_{n}-u_{0})\Vert \rightarrow 0
\end{align*}
as $n\rightarrow \infty $. On the other hand, by (PS2) and be the weak convergence of $(u_{n})_{n\in \mathbb{N}}$ to $u_{0}$ in $X$, we have 
\begin{equation*}
|\varphi _{y}^{\prime }(u_{n})(u_{n}-u_{0})|\leqslant \Vert \varphi_{y}^{\prime }(u_{n})\Vert _{X^{\ast }}\Vert u_{n}-u_{0}\Vert_A \rightarrow 0
\end{equation*}
and 
\begin{equation*}
|\varphi _{y}^{\prime }(u_{0})(u_{n}-u_{0})|\rightarrow 0
\end{equation*}
as $n\rightarrow \infty $. Coining the above observations together, we can now show that equality (\ref{equality_PS}) implies 
\begin{equation*}
\Vert Au_{n}-Au_{0}\Vert \rightarrow 0
\end{equation*}
as $n\rightarrow \infty $ which means, by (\ref{equiv}), that $(u_{n})_{n\in \mathbb{N}}$ converges strongly to $u_{0}$ in $X$. This shows that $\varphi _{y}$ satisfies (PS) condition.\bigskip

Now, we show that $F^{\prime }(u)$ is bijective for any $u\in X.$ Fix $u\in X.$ Since $A$ is self-adjoint operator and since $\widetilde{N}^{\prime }(u)$ is a symmetric compact linear operator, it follows that $F^{\prime }(u)$ is self-adjoint operator, by \cite[RKNG Theorem in real Hilbert space]{Gust}. Using (A2) and (N2) we get 
\begin{equation*}
\Vert Ah-\widetilde{N}^{\prime }(u)h\Vert \Vert h\Vert \geqslant \skal{Ah-\widetilde{N}'(u)h}{h}=\skal{Ah}{h}-\skal{N'(u)h}{h}\geqslant \alpha \Vert h\Vert ^{2}-\gamma \Vert h\Vert ^{2}.
\end{equation*}
Hence, equivalently 
\begin{equation}
\Vert F^{\prime }(u)h\Vert =\Vert Ah-\widetilde{N}^{\prime }(u)h\Vert \geqslant (\alpha-\gamma )\Vert h\Vert
\end{equation}
for all $h\in H$. Then, as $F^{\prime }(u)$ is linear, it is injective. Applying \cite[Proposition 3.10]{S}, $F^{\prime }(u)$ is also surjective, and so bijective.

Now we can apply Theorem \ref{theorem_idczak} and obtain a unique $u^{\ast
}\in X$ such that $0=F(u^{\ast })=Au^{\ast }-\widetilde{N}(u^{\ast })$.
\end{proof}

\section{Applications}

As an application of Theorem \ref{theorem_main} we study the following
nonlinear Dirichlet problem 
\begin{equation}
\left\{ 
\begin{array}{l}
-\Delta u(x)=f(x,u(x)), \\ 
\left. u\right\vert _{\partial \Omega }=0.%
\end{array}
\right.  \label{problem_partial}
\end{equation}
Here $\Omega \subset \mathbb{R}^{m}$ is an open and bounded set of class $%
C^{2}$ and $f:\Omega \times \mathbb{R}\rightarrow \mathbb{R}$ is a $C^{1}$%
-Caratheodory function, i.e. for a.e. $x\in \Omega $, $f(x,\cdot )$ is of
class $C^{1}$ and for all $u\in \mathbb{R}$, $f(\cdot ,u)$, $%
f_{u}^{\prime}(\cdot ,u)$ are measurable.\bigskip

An unbounded linear operator $A$ on $H=L^{2}(\Omega )$ defined by $%
Au=-\Delta u$ is self-adjoint if $D(A)=H_{0}^{1}(\Omega )\cap H^{2}(\Omega )$%
, see \cite[Proposition 10.19]{S}. By the Poincar{\' e} inequality 
\begin{equation*}
c_{\Omega }\int_{\Omega }|u(x)|^{2}dx\leqslant \sum_{k=1}^{m}\int_{\Omega
}\left\vert \partial _{k}u(x)\right\vert ^{2}dx
\end{equation*}%
and Green's formula we have 
\begin{equation*}
\skal{A u}{u}\geqslant c_{\Omega }^{2}\Vert u\Vert ^{2},\bigskip u\in D(A),
\end{equation*}%
where $c_{\Omega }$ is a constant in Poincar{\' e} inequality and $%
\skal{\cdot}{\cdot}$ and $\Vert \cdot \Vert $ denote the scalar product and
the norm in $H$, respectively. On $D(A)$ the graph norm of $A$ and norm $%
\Vert \cdot \Vert _{H^{2}(\Omega )}$ are equivalent, see \cite[p. 240]{S}. Therefore if we put 
\begin{equation*}
B_{m}(\Omega ):=\left\{ 
\begin{array}{ll}
C(\overline{\Omega }) & \text{if }m\leqslant 3, \\ 
L^{p_{m}}(\Omega ) & \text{if }m\geqslant 4,%
\end{array}%
\right. 
\end{equation*}%
where $p_{m}>2$ for $m=4$ and $p_{m}\in \left( 2,\frac{2m}{m-4}\right) $ for 
$m>4$, we obtain the compact embedding $(D(A),\skal{\cdot}{\cdot}%
_{A})\hookrightarrow (B_{m}(\Omega ),\Vert \cdot \Vert _{B_{m}})$, see \cite[%
Theorem 1.51]{MMP}. For $m\leqslant 3$ let $c_{m}>0$ be such that $\Vert
u\Vert _{\infty }\leqslant c_{m}\Vert Au\Vert $ for all $u\in D(A)$, where $%
\Vert \cdot \Vert _{\infty }$ denotes the supremum norm.\bigskip 

We will need the following assumptions on $f$:

\begin{itemize}
\item[(P1m)]$ $ 
\begin{itemize}
\item[(if $m\leqslant 3$)] there exist $a_1,b_1\in L^2(\Omega)$, $%
\|b_1\|<c_m^{-1}$ such that $|f(x,u)|\leqslant a_1(x)+b_1(x)|u|$ for a.e. $%
x\in\Omega$ and every $u\in\mathbb{R}$;
\item[(if $m\geqslant 4$)] there exist $a_1\in L^2(\Omega)$ and $%
b_1\in(0,c_\Omega)$ such that $|f(x,u)|\leqslant a_1(x)+b_1|u|$ for a.e. $%
x\in\Omega$ and every $u\in\mathbb{R}$;
\end{itemize}
\item[(P2m)]$ $ 
\begin{itemize}
\item[(if $m\leqslant 3$)] there exist $a_2\in L^2(\Omega)$ and $g\in C(%
\mathbb{R},\mathbb{R})$ such that $|f_u^{\prime }(x,u)|\leqslant a_2(x)g(u)$
for a.e. $x\in\Omega$ and every $u\in\mathbb{R}$;
\item[(if $m\geqslant4$)] there exist $a_2\in L^{q}(\Omega)$ and $b_2>0$
such that $|f_u^{\prime }(x,u)|\leqslant a_2(x) +b_2|u|^r$ for a.e. $%
x\in\Omega$ and every $u\in\mathbb{R}$, where $r=\frac{p_m-2}{2}$ and $q=%
\frac{2p_m}{p_m-2}$;
\end{itemize}


\item[(P3)] there exists $b_{3}\in (0,c_{\Omega }^{2})$ such that $%
f_{u}^{\prime }(x,u)<b_{3}$ for a.e. $x\in \Omega $ and every $u\in \mathbb{R%
}$.\bigskip
\end{itemize}

Under the above assumptions the operator $N_{f}:B_{m}(\Omega )\rightarrow
L^{2}(\Omega )$ given by formula $N_{f}(u)(x)=f(x,u(x))$ for $x\in \Omega $
is of class $C^{1}$ with $N_{f}^{\prime }(u)(h)=N_{f_{u}^{\prime }}(u)(h)$
for all $u,h\in B_{m}(\Omega )$. For case $m\geqslant 4$ see \cite[%
Proposition 2.78]{MMP} and for $m\leqslant 3$ see Appendix. Clearly, $%
N_{f}^{\prime }(u)$ is symmetric operator for all $u\in B_{m}(\Omega )$%
.\bigskip

In oder to check (N2), take some $u\in B_{m}(\Omega )$. Using (P1m) we have
for $m\leqslant 3$ 
\begin{align*}
\Vert N_{f}(u)\Vert & =\left( \int_{\Omega }|f(x,u(x))|^{2}\ dx\right) ^{%
\frac{1}{2}}\leqslant \left( \int_{\Omega }|a_{1}(x)|^{2}\ dx\right) ^{\frac{%
1}{2}}+\left( \int_{\Omega }b_{1}(x)|u(x)|)^{2}\ dx\right) ^{\frac{1}{2}} \\
& =\Vert a_{1}\Vert +\Vert b_{1}\Vert \Vert u\Vert _{\infty }\leqslant \Vert
a_{1}\Vert +c_{m}\Vert b_{1}\Vert \Vert Au\Vert .
\end{align*}
and for $m\geqslant 4$ 
\begin{align*}
\Vert N_{f}(u)\Vert & =\left( \int_{\Omega }|f(x,u(x))|^{2}\ dx\right) ^{%
\frac{1}{2}}\leqslant \left( \int_{\Omega }|a_{1}(x)|^{2}\ dx\right) ^{\frac{%
1}{2}}+b_{1}\left( \int_{\Omega }|u(x)|^{2}\ dx\right) ^{\frac{1}{2}} \\
& =\Vert a_{1}\Vert +b_{1}\Vert u\Vert \leqslant \frac{b_{1}}{c_{\Omega }}%
\Vert Au\Vert +\Vert a_{1}\Vert .
\end{align*}
Assumption (P3) provides that for every $u,h\in B_{m}(\Omega )$ there is 
\begin{equation*}
\skal{N_f'(u)h}{h}=\int_{\Omega }f_{u}^{\prime }(x,u(x))h(x)h(x)\
dx\leqslant b_{3}\int_{\Omega }|h(x)|^{2}\ dx=b_{3}\Vert h\Vert ^{2}.
\end{equation*}
As a conclusion, we obtained


\begin{theorem}
Assume that $f:\Omega\times\mathbb{R}\to\mathbb{R}$ is a $C^1$-Caratheodory
function such that {\normalfont(P1m)}, {\normalfont(P2m)} and {\normalfont%
(P3)} hold. Then problem {\normalfont(\ref{problem_partial})} has an unique
solution in $H_0^1(\Omega)\cap H^2(\Omega)$.
\end{theorem}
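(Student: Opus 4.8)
The plan is to recognize (\ref{problem_partial}) as a concrete instance of the abstract equation (\ref{mainEqu}) and then apply Theorem \ref{theorem_main}. Concretely, I would take $H=L^2(\Omega)$ with its usual inner product, $A=-\Delta$ with domain $D(A)=H_0^1(\Omega)\cap H^2(\Omega)$, the Banach space $B=B_m(\Omega)$ introduced above, and $N=N_f$ the Nemytskii operator $N_f(u)(x)=f(x,u(x))$. With these identifications a function $u\in D(A)$ satisfies $Au=N_f(u)$ if and only if it is a solution of (\ref{problem_partial}) lying in $H_0^1(\Omega)\cap H^2(\Omega)$, since the homogeneous Dirichlet condition is already built into $D(A)$. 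Hence it suffices to verify the hypotheses (A1)--(A2) and (N1)--(N2) of Theorem \ref{theorem_main}, after which the conclusion --- a unique $u^\ast\in H_0^1(\Omega)\cap H^2(\Omega)$ --- is immediate.

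Hypotheses (A1) and (A2) do not involve $f$ and have essentially been recorded in the discussion preceding the statement. Indeed, $A$ is self-adjoint on $L^2(\Omega)$ with the stated domain by \cite[Proposition 10.19]{S}; the graph norm of $A$ is equivalent to $\|\cdot\|_{H^2(\Omega)}$, so the embedding $(D(A),\skal{\cdot}{\cdot}_A)\hookrightarrow(B_m(\Omega),\|\cdot\|_{B_m})$ is compact by \cite[Theorem 1.51]{MMP} (this is exactly where the restriction on the exponent $p_m$ for $m\geqslant 4$ is used), which is (A1); and the Poincar\'{e} inequality together with Green's formula give $\skal{Au}{u}\geqslant c_\Omega^2\|u\|^2$ for $u\in D(A)$, which is (A2) with $\alpha=c_\Omega^2$.

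For (N1) I would invoke \cite[Proposition 2.78]{MMP} in the range $m\geqslant 4$ and the Appendix in the range $m\leqslant 3$ to conclude that $N_f\colon B_m(\Omega)\to L^2(\Omega)$ is of class $C^1$ with $N_f'(u)h=f_u'(\cdot,u(\cdot))\,h$; symmetry of $N_f'(u)$ is clear because $\skal{N_f'(u)h}{k}=\int_\Omega f_u'(x,u(x))h(x)k(x)\,dx$ is symmetric in $h$ and $k$. The growth hypothesis (P2m) is precisely what makes this $C^1$ statement applicable, i.e.\ guarantees that $h\mapsto f_u'(\cdot,u(\cdot))h$ is a bounded linear map $B_m(\Omega)\to L^2(\Omega)$. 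For (N2)(i) I would use (P1m): the estimates already displayed give $\|N_f(u)\|\leqslant\|a_1\|+\beta_0\|Au\|$ with a constant $\beta_0<1$ (explicitly $\beta_0=c_m\|b_1\|<1$ for $m\leqslant 3$, by $\|b_1\|<c_m^{-1}$, and $\beta_0=b_1/c_\Omega<1$ for $m\geqslant 4$, by $b_1<c_\Omega$), so (N2)(i) holds with any $\beta\in(\beta_0,1)$ and any $\delta>\|a_1\|$. Finally (P3) gives $\skal{N_f'(u)h}{h}=\int_\Omega f_u'(x,u(x))|h(x)|^2\,dx\leqslant b_3\|h\|^2$ with $b_3<c_\Omega^2=\alpha$, which is (N2)(ii) with any $\gamma\in(b_3,c_\Omega^2)$. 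With (A1)--(A2) and (N1)--(N2) all verified, Theorem \ref{theorem_main} delivers a unique $u^\ast\in D(A)$ with $Au^\ast=N_f(u^\ast)$, i.e.\ a unique solution of (\ref{problem_partial}) in $H_0^1(\Omega)\cap H^2(\Omega)$.

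The only genuine work, therefore, is the case-by-case verification of the Nemytskii-operator properties (N1) and of the bounds in (N2); among these the delicate point is (N1) in the low-dimensional regime $m\leqslant 3$, where $B_m(\Omega)=C(\overline{\Omega})$ and the differentiability of the superposition operator on the space of continuous functions must be argued directly, which is the content of the Appendix. Everything else reduces to the dictionary between the boundary value problem and the abstract setting together with the elementary inequalities already assembled in the text.
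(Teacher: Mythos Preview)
Your proposal is correct and follows exactly the paper's approach: the discussion preceding the theorem in the paper is precisely this verification of (A1)--(A2) and (N1)--(N2) for the choices $H=L^2(\Omega)$, $A=-\Delta$, $B=B_m(\Omega)$, $N=N_f$, after which Theorem~\ref{theorem_main} is invoked. Your handling of the strict inequalities in (N2) by taking $\beta\in(\beta_0,1)$, $\delta>\|a_1\|$, $\gamma\in(b_3,c_\Omega^2)$ is in fact slightly more careful than the paper's own presentation.
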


As an example the following problem 
\begin{equation}
\left\{ 
\begin{array}{l}
-\Delta u(x)=\left( 1-\frac{1}{|x|^{2}}\right) (cu(x)-1), \\ 
\left. u\right\vert _{\partial \Omega }=0.%
\end{array}%
\right. 
\end{equation}%
where $\Omega \subset \mathbb{R}^{3}$ is any open and bounded set of class $%
C^{2}$, $c>0$ is a suitable constant and $|\cdot |$ denotes the Euclidean
norm, has an unique solution in $H_{0}^{1}(\Omega )\cap H^{2}(\Omega )$.

\section{Appendix}

In this appendix we show that for $m\leqslant 3$ if $f:\Omega \times \mathbb{%
R}\rightarrow \mathbb{R}$ is a $C^{1}$-Caratheodory function such that {%
\normalfont(P1m)}, {\normalfont(P2m)} hold, then an operator $N_{f}:C(%
\overline{\Omega })\rightarrow L^{2}(\Omega )$ given by formula $%
N_{f}(u)(x)=f(x,u(x))$ for $x\in \Omega $ is of class $C^{1}$ with $%
N_{f}^{\prime }(u)(h)=N_{f_{u}^{\prime }}(u)(h)$ for all $u,h\in C(\overline{%
\Omega })$. By Theorem B in \cite{Besov}, if $|f(x,u)|\leqslant a(x)g(u)$
for all $x\in \Omega $ and $u\in \mathbb{R}$ with $a\in L^{2}(\Omega )$ and $%
g\in C(\mathbb{R},\mathbb{R})$, then $N_{f}$ is continuous from $C(\overline{%
\Omega })$ into $L^{2}(\Omega )$.\bigskip

First, we show that for all $u,h\in C(\overline{\Omega })$, $%
N_{f_{u}^{\prime }}(u)(h)\in L^{2}(\Omega )$. Indeed, we have 
\begin{align*}
\Vert N_{f_{u}^{\prime }}(u)(h)\Vert & =\left( \int_{\Omega
}|f_{u}^{\prime}(x,u(x))h(x)|^{2}dx\right) ^{\frac{1}{2}}\leqslant \Vert
h\Vert _{\infty}\left( \int_{\Omega }|f_{u}^{\prime }(x,u(x))|^{2}dx\right)
^{\frac{1}{2}} \\
& \leqslant \Vert h\Vert _{\infty }\left(
\int_{\Omega}|a_{2}(x)g(u(x))|^{2}dx\right) ^{\frac{1}{2}}\leqslant \Vert
h\Vert _{\infty }\Vert a\Vert \sup_{x\in \Omega }|g(u(x))|<\infty.
\end{align*}
Now, fix $u\in C(\overline{\Omega })$ and let 
\begin{equation*}
w(h)=N_{f}(u+h)-N_{f}(u)-N_{f_{u}^{\prime }}(u)(h)
\end{equation*}
for all $h\in C(\overline{\Omega })$. Let $h\in C(\overline{\Omega })$. We
have 
\begin{equation*}
\displaystyle f(x,u(x)+h(x))-f(x,u(x))=\int_{0}^{1}f_{u}^{\prime}(x,u(x)+%
\tau h(x))h(x)\ d\tau
\end{equation*}
Hence, using Fubini's theorem, we obtain 
\begin{align*}
\Vert w(h)\Vert & =\left( \int_{\Omega }\left\vert
f(x,u(x)+h(x))-f(x,u(x))-f_{u}^{\prime }(x,u(x))h(x)\right\vert
^{2}dx\right) ^{\frac{1}{2}} \\
& \leqslant \Vert h\Vert _{\infty }\left( \int_{\Omega
}\left\vert\int_{0}^{1}f_{u}^{\prime }(x,u(x)+\tau h(x))\ -f_{u}^{\prime
}(x,u(x))\ d\tau \right\vert ^{2}dx\right) ^{\frac{1}{2}} \\
& \leqslant \Vert h\Vert _{\infty }\left(
\int_{0}^{1}\int_{\Omega}\left\vert f_{u}^{\prime }(x,u(x)+\tau h(x))\
-f_{u}^{\prime }(x,u(x))\right\vert ^{2}dx\ d\tau \right) ^{\frac{1}{2}}
\end{align*}
Since $N_{f_{u}^{\prime }}:C(\overline{\Omega })\rightarrow L^{2}(\Omega)$
is continuous, the above implies that 
\begin{equation*}
\frac{\Vert w(h)\Vert }{\Vert h\Vert _{\infty }}\rightarrow 0
\end{equation*}
as $\Vert h\Vert _{\infty }\rightarrow 0$. The continuity of the map $%
N_{f}^{\prime }:C(\overline{\Omega })\rightarrow \mathcal{L}(C(\overline{%
\Omega }),L^{2}(\Omega ))$ follows now from the continuity of $%
N_{f_{u}^{\prime }}:C(\overline{\Omega })\rightarrow L^{2}(\Omega )$.
Indeed, suppose that $u_{n}\rightarrow u_{0}$ in $C(\overline{\Omega })$.
Then 
\begin{align*}
\Vert N_{f}^{\prime }(u_{n})-N_{f}^{\prime }(u_{0})\Vert _{\mathcal{L}(C(%
\overline{\Omega }),L^{2}(\Omega ))}& =\sup_{\Vert h\Vert _{\infty}=1}\left(
\int_{\Omega }|f_{u}^{\prime
}(x,u_{n}(x))h(x)-f_{u}^{\prime}(x,u_{0}(x))h(x)|^{2}\ dx\right)^{\frac{1}{2}%
} \\
& \leqslant \left( \int_{\Omega }|f_{u}^{\prime
}(x,u_{n}(x))-f_{u}^{\prime}(x,u_{0}(x))|^{2}\ dx\right) ^{\frac{1}{2}%
}\rightarrow 0
\end{align*}
as $n\rightarrow 0$.

\begin{tabular}{l}
Micha\l\ Be\l dzi\'{n}ski \\ 
Institute of Mathematics, \\ 
Lodz University of Technology, \\ 
Wolczanska 215, 90-924 Lodz, Poland, \\ 
\texttt{beldzinski.michal@outlook.com}%
\end{tabular}

\bigskip

\begin{tabular}{l}
Marek Galewski \\ 
Institute of Mathematics, \\ 
Lodz University of Technology, \\ 
Wolczanska 215, 90-924 Lodz, Poland, \\ 
\texttt{marek.galewski@p.lodz.pl}%
\end{tabular}

\bigskip

\begin{tabular}{l}
Robert Stegli{\' n}ski \\ 
Institute of Mathematics, \\ 
Lodz University of Technology, \\ 
Wolczanska 215, 90-924 Lodz, Poland, \\ 
\texttt{robert.steglinski@p.lodz.pl}%
\end{tabular}

\end{document}